\definecolor{verydarkblue}{rgb}{0,0,0.5}
\theoremstyle{plain}
\crefname{introtheorem}{Theorem}{Theorems}
\crefname{introcorollary}{Corollary}{Corollaries}
\newtheorem{theorem}{Theorem}[section]
\newtheorem*{theorem*}{Theorem}
\newtheorem{proposition}[theorem]{Proposition}
\theoremstyle{definition}
\theoremstyle{remark}
\newtheorem{notation}[theorem]{Notation}
\newtheorem{remark}[theorem]{Remark}
\newtheorem{step}{Step}
\numberwithin{figure}{section}
\numberwithin{equation}{section}
\def\N{{\mathbb N}}
\def\A{{\mathbb A}}
\def\cN{\mathcal{N}}
\def\fa{\mathfrak{a}}
\def\a{\alpha}
\def\b{\beta}
\def\g{\gamma}
\def\p{\pi}
\def\G{\Gamma}
\def\S{\Sigma}
\def\.{\cdot}
\let\circum\^
\def\^{\widehat}
\def\~{\widetilde}
\def\o{\circ}
\def\({\left(}
\def\){\right)}
\def\*{{}^*}
\renewcommand{\and}{ \ \ \text{ and } \ \ }
\def\red{\mathrm{red}}
\def\sm{\mathrm{sm}}
\DeclareMathOperator{\Spec} {Spec}
\DeclareMathOperator{\Sing} {Sing}
\DeclareMathOperator{\ord} {ord}
\DeclareMathOperator{\chr} {char}
\begin{document}

\title[Arcs on rational double points]
{Arcs on rational double points in arbitrary characteristic}

\author{Tommaso de Fernex}
\address{Department of Mathematics, University of Utah, Salt Lake City, UT 84112, USA}
\email{{\tt defernex@math.utah.edu}}

\author{Shih-Hsin Wang}
\address{Department of Mathematics, University of Utah, Salt Lake City, UT 84112, USA}
\email{{\tt shwang@math.utah.edu}}

\subjclass[2020]{%
Primary {\scriptsize 14E18};
Secondary {\scriptsize 14J17}.}
\keywords{Nash problem, jet scheme, rational double points.}

\thanks{%
The research of the first author was partially supported by NSF grant DMS-2001254.
The research of the second author was partially supported by NSF RTG DMS-1840190.
}

\thanks{%
The research of the first author was partially supported by NSF Grant DMS-2001254.
}

\begin{abstract}
We prove that the Nash problem holds for two-dimensional rational double points in all characteristics.
The proof is based on a direct computation of the families of arcs through these singularities. 
\end{abstract}

\maketitle

\section{Introduction}

In the late sixties, 
Nash uncovered a connection between certain information coming from resolution of
singularities of complex varieties and the topology of the space of formal arcs through
their singularities \cite{Nas95}.
The connection takes a particularly simple form in dimension two, where a naturally defined 
injective map, called the \emph{Nash map}, is established 
from 
the set of irreducible components of the space of arcs of the singularity
to 
the set of exceptional divisors in the minimal resolution.
This map can be interpreted as
an inclusion between certain sets of divisorial valuations, and Nash conjectured
that the map is surjective, namely, the two sets of valuations are the same. 
This conjecture became known as the \emph{Nash problem}. Nash also proposed a conjectural picture for higher dimensional singularities which
turned out to hold in some cases but fail in general \cite{IK03,dF13}.

Pioneer work towards the Nash problem 
was done in \cite{LJ80} where an approach via \emph{wedges} 
(namely, arcs on arc spaces) to detect adjacencies in the space of arcs was first introduced, 
though a rigorous proof of the existence of such wedges, based on a result known as the \emph{curve selection lemma}, 
was only established years later in \cite{Reg06}. 
Several special cases of the 
Nash problem in dimension two were solved throughout the years in the works
\cite{Nas95,Reg95,LJR99,IK03,Reg04,PPP06,Ple08,LA11,PS12,PP13} before the problem 
was completely solved in \cite{FdBPP12}.
An independent proof for rational surface singularities was given in \cite{Reg12}. 
Surveys can be found in \cite{Ple15,dF18}. 

People have looked at the Nash problem for normal surfaces in positive characteristics as well, 
see for instance \cite{Reg95,IK03,Reg06,Reg12,Nob18}. Some cases such as 
minimal surface singularities (i.e., two-dimensional rational singularities whose fundamental cycle in the minimal resolution is reduced) and toric singularities (in all dimensions) were settled, but
beyond that, little is know about the Nash problem in positive characteristics.  
It should be noted that, except for $A_n$ singularities, 
rational double points are not covered by those results. These can be considered in many 
regards as the simplest singularities. They were classically studied, 
with their classification tracing through \cite{DV34,Art66,Lip69,GK90}, 
and appear in many different contexts.

The purpose of this paper is to settle the Nash problem for rational double points
in positive characteristics.

\begin{theorem}
\label{t:Nash-DV}
The Nash problem holds for two-dimensional rational double points over algebraically closed fields of 
arbitrary characteristic.
\end{theorem}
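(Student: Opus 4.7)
The plan is as follows. Let $(X,0)$ be a rational double point over $k=\bar k$ with minimal resolution $\pi\colon \tilde X\to X$ and exceptional divisors $E_1,\dots,E_r$. Nash's original injectivity argument for the Nash map
\[
\mathcal N\colon\{\text{irreducible components of }\pi_\infty^{-1}(0)\}\hookrightarrow\{E_1,\dots,E_r\}
\]
goes through in arbitrary characteristic, so it suffices to exhibit $r$ distinct irreducible components of $\pi_\infty^{-1}(0)\subset X_\infty$. I would do this by producing the components explicitly, and then prove their maximality by direct computation with power series.

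The starting point is Artin's classification of rational double points in arbitrary characteristic, which replaces each ADE equation by a short list of normal forms $f(x,y,z)=0$ depending on the residue characteristic. For each exceptional divisor $E_i$, I would construct a candidate family $N_{E_i}\subset\pi_\infty^{-1}(0)$ consisting of the arcs whose strict transform in $\tilde X$ is centered at a general point of $E_i\setminus\bigcup_{j\ne i}E_j$. Concretely, choosing an affine chart of $\tilde X$ meeting $E_i$ transversally at a smooth point with local coordinates $(u,v)$, one writes a generic arc there as $u=\alpha+t\cdot(\text{unit})$ and $v=t\cdot(\text{unit})$, pushes it down via the explicit blowup map to an arc $(x(t),y(t),z(t))$ on $X$, and lets $N_{E_i}$ be the family of all such arcs. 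By construction $\overline{N_{E_i}}$ is irreducible, and the Nash map sends it to $E_i$ provided that $\overline{N_{E_i}}$ is a component.

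The technical heart of the argument is showing $\overline{N_{E_i}}\not\subset\overline{N_{E_j}}$ for $i\ne j$. I would stratify arcs on $X$ by invariants such as the vector $\big(\ord_t x(t),\ord_t y(t),\ord_t z(t)\big)$ together with finitely many leading-coefficient conditions, so that each $N_{E_i}$ appears as a locally closed stratum, and then exhibit, for each pair $(i,j)$, a regular function (or differential condition) on $X$ whose $t$-order on the generic arc of $N_{E_j}$ is strictly smaller than on the generic arc of $N_{E_i}$. Semicontinuity of the $t$-adic valuation then rules out the inclusion, reducing the problem to a finite-dimensional elimination at a controlled truncation level of the arc space.

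The main obstacle is the uniformity of this analysis across Artin's list. The $A_n$ case and the characteristic-zero $D_n,E_n$ cases are essentially known, and the arc stratification can be read off from the standard resolution. The new work lies in the exotic normal forms in characteristics $2$, $3$, $5$ (the types $D_n^r$, $E_6^r$, $E_7^r$, $E_8^r$ of Artin), whose defining polynomials contain extra monomials that alter the resolution charts and can produce unexpected relations among the power-series coefficients of the arcs. I expect the proof to proceed type by type, with the most intricate cases being $D_n$ and $E_8$ in characteristic $2$, where the exceptional chain is longest and the equations are richest.
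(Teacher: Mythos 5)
Your overall framing (inject via Nash's argument, exhibit one family $N_{E_i}$ per exceptional divisor of the minimal resolution, and reduce everything to ruling out inclusions $\overline{N_{E_i}}\subset\overline{N_{E_j}}$) is sound and is indeed equivalent to what must be proved; but the tool you propose for the technical heart does not work. Finding, for each pair $(i,j)$, a regular function $g\in\mathcal{O}_{X,0}$ with $\ord_t g$ strictly smaller on the generic arc of $N_{E_j}$ than on that of $N_{E_i}$, and invoking semicontinuity of the order, is exactly the classical valuative criterion (Nash, Ishii--Koll\'ar, Pl\'enat). Since the generic arc of $N_{E_i}$ computes the divisorial valuation $\ord_{E_i}$ on $\mathcal{O}_{X,0}$, your criterion can only exclude the inclusion $\overline{N_{E_i}}\subset\overline{N_{E_j}}$ when $\ord_{E_i}\not\geq\ord_{E_j}$ as valuations on the local ring. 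For the exceptional types this fails: for $E_6$, $E_7$ and especially $E_8$ there are pairs of exceptional divisors (notably involving the divisors of high multiplicity in the fundamental cycle) for which one valuation dominates the other on every function, so no such $g$ exists, yet the adjacency still has to be excluded. This is precisely why $E_8$ resisted all function-order and wedge-type methods in characteristic zero and ultimately required the topological approach of Fern\'andez de Bobadilla--Pe Pereira; and in positive characteristic the usual fallbacks are unavailable (the wedge-lifting statement fails by Reguera's $E_8$ examples in characteristics $2,3,5$, and the transcendental argument does not apply). The parenthetical ``or differential condition'' is not developed and does not repair this. So the step you call the technical heart has a genuine gap for exactly the hardest cases your plan is supposed to cover.

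For contrast, the paper does not separate components by comparing orders of functions at generic points at all. It computes $X_\infty^0$ directly from the equation via Hasse--Schmidt derivatives, building locally closed irreducible sets $C_i^\o$ level by level in the jet schemes (eliminating variables on suitable localizations), and then rules out inclusions by a finer argument in arc-space coordinates: e.g.\ for $E_8$ one shows that a specific coordinate such as $z_5$ (or $x_4$ in characteristic $2$) vanishes identically on $C_1\cap V(x_{\le 3},y_{\le 1},z_{\le 4}\setminus z_3)$, a locus containing $C_2$, while it is not identically zero on $C_2$. This is a statement about the restriction of one component to a coordinate subspace containing the other, which is strictly stronger than any inequality between the valuations $\ord_{E_i}$ and $\ord_{E_j}$ on $\mathcal{O}_{X,0}$, and it is what actually closes the cases your valuative-criterion step cannot reach. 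If you want to salvage your outline, you would need to replace the function-order comparison by this kind of equation-level analysis (or some other mechanism not relying on the valuative criterion) for the dominating pairs in $D_n$, $E_6$, $E_7$, $E_8$.
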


It is instructive to look at the history of the Nash problem for rational double points
in characteristic zero. While the case of $A_n$ singularities
can be checked with a simple computation and was already well understood by Nash, 
it took several decades before the remaining
rational double points were settled in characteristic zero. 
The cases of $D_n$ and $E_6$ singularities were treated in \cite{Ple08,PS12},
and a new proof for $D_n$ and $E_6$ was given \cite{LA11}
where the $E_7$ singularity case was settled as well. 
However, the remaining case of the $E_8$ singularity
proved to be too hard to be dealt with the methods
introduced in those works. 
For this last case, a completely different route was taken in \cite{PP13}, 
where the Nash problem was solved for all quotient surface singularities after translating it
into a topological one (cf.\ \cite{FdB12}); 
notably, a similar approach was later applied to tackle the general case in \cite{FdBPP12}. 
In all of the above works the ground field is assumed 
to have characteristic zero. This assumption is especially relevant for the proof of the $E_8$
case which uses transcendental methods in an essential way. 

A purely algebraic proof of the Nash problem for surfaces was given in \cite{dFD16}. 
Both proofs from \cite{FdBPP12,dFD16} rely on wedges and the curve selection lemma.
In fact, as remarked by Lejeune-Jalabert and Reguera, the proof of \cite{dFD16}
solves an \emph{a priori} harder problem, known as the \emph{lifting wedges problem}, which goes back to \cite{LJ80}
and is known to imply the Nash problem \cite{Reg04}.  
By contrast, it was shown in \cite{Reg12} that the wedge problem fails in positive characteristics;
the examples are given by an $E_8$ singularity in characteristics $2,3,5$. 

With \cref{t:Nash-DV} now settling the Nash problem for $E_8$ in all characteristics, we
conclude that these two problems---the Nash problem and the wedge lifting problem---are 
not equivalent in positive characteristic.

The proof in \cite{dFD16} breaks down in positive characteristics
due to the possible occurrence of wild ramification, which invalidate a key
estimate in the proof. 
With the approach via wedges failing in positive characteristics, 
we address \cref{t:Nash-DV} via a direct computation from the given equations. 
The idea is to construct the irreducible components in the space of arcs
from the irreducible components at the finite jet level, by going up one level at a time. 
The proof is inspired by the computations of the
irreducible components of the jets spaces through rational double points carried out 
in characteristic zero in \cite{Mou14}, though there are some differences in the way
the computations are organized here. 

In \cite{dFW24}, we constructed a map $\Psi_{X,m}$, defined for $m \gg 1$, from the set of irreducible components
of the space of arcs through a singularity and the set of irreducible components
of the space of $m$-jets through the singularity. There, we used results on the Nash problem
to show that this map is surjective 
for a certain class of singularities of characteristic zero that generalize the notion
of Du Val singularities in higher dimensions. 
Here, looking at two-dimensional rational double points of arbitrary characteristic,
we follow an opposite approach, where we first compute the irreducible components at the jet level, 
and then show that map $\Psi_{X,m}$ is surjective, deducing from there the validity of the Nash problem. 

A crucial part of the proof is to
distinguish the components at the arc level 
(which is equivalent, in this setting, 
to showing that $\Psi_{X,m}$ is surjective). This in principle
requires controlling infinitely many equations. 
We make this possible by carefully organizing our computations, 
taking radicals at each step to simplify the equations of the sets.

Our computations apply mostly uniformly to all characteristics, though
some minor adjustments are needed in small characteristics 
(between characteristics 2 and 3 for $E_6$ and $E_7$,  
and between characteristics 2, 3 and 5 for $E_8$). 
The fact that the computations are sensitive
to small characteristics can be seen as a manifestation of
the subtlety of the problem in that setting, as one should expect in view of Reguera's 
aforementioned counterexamples to the wedge lifting problem. 
In the cases where different isomorphism types exist
in small characteristics, the same computations carry through for all isomorphism types.

\subsection*{Acknowledgements}

We wish to thank Javier Fernandez de Bobadilla and Roi Docampo for useful discussions.

\section{Jet and arc components through singularities}

Let $k$ be an algebraically closed field.
For a $k$-algebra $R$, let $R_m$ be the universal Hasse--Schmidt $k$-algebra of order $m$ of $R$ (cf.\ \cite{Voj07}). 
For any $f \in R$ and $i \le m$, we denote by $f_i \in R_m$ the $i$-th
Hasse--Schmidt derivative of $f$. 
For instance, if $R = k[x,y,z]$ then $R_m = k[x_i,y_i,z_i \mid 0 \le i \le m]$, 
and if $R = k[x,y,z]/(f)$ for some $f \in k[x,y,z]$ then 
$R_m = k[x_i,y_i,z_i \mid 0 \le i \le m]/(f_j \mid 0 \le j \le m)$.

Let $X$ be a $k$-scheme and $m \in \N$. 
We denote by $X_m$ and $X_\infty$ the $m$-jet scheme and arc space
of $X$, respectively (cf.\ \cite{EM09}). These are schemes over $k$
whose $K$-valued points, for any field extension $K/k$, are 
jets $\g \colon \Spec K[t]/(t^{m+1}) \to X$ and arcs
$\a \colon \Spec K[[t]] \to X$ with coefficients in $K$. 
If $X = \Spec R$ for a $k$-algebra $R$, then we have $X_m = \Spec R_m$. 
We denote by $\p_m = \p_{X,m} \colon X_m \to X$ and $\p = \p_X \colon X_\infty \to X$ the natural projections
mapping a jet $\g(t)$ and an arc $\a(t)$ to their base point $\g(0), \a(0) \in X$, respectively. 
For a closed set $\S \in X$, we denote by $X_m^\S := \p_m^{-1}(\S)_\red$
and $X_\infty^\S := \p^{-1}(\S)_\red$ the sets of $m$-jets and of arcs through $\S \subset X$. 

Assume now that $X$ is a variety and $\S = \Sing X$ is its singular locus. 
Then $X_m$ is a scheme of finite type over $k$, and therefore
$X_m^\S$, as a closed subscheme of a scheme of finite type, has finitely many irreducible components. 
We call them the \emph{$m$-jet components} through $\S \subset X$. 
In general, however, $X_\infty$ is not of finite type, and in fact it is not Noetherian. 
Nonetheless, it is still the case that $X_\infty^\S$ has finitely many irreducible
components as long as $X$ admits a resolution of singularities \cite[Corollary~8.8]{dF18}.
We call them the \emph{arc components} through $\S \subset X$. 

If either $\chr k = 0$ or if $X$ has isolated singularities
and admits a resolution of singularities $f \colon Y \to X$, 
then every irreducible component $C$ of $X_\infty^\S$ is 
equal to the closure of a set of the form $f_\infty(\p_Y^{-1}(E))$
for some prime divisor $E$ on $Y$ with $f(E) \subset \S$;
here $f_\infty \colon Y_\infty \to X_\infty$ is the natural map
defined by $f_\infty(\a) := f\o\a$. In general, the same is true if we restrict our attention to arc components 
that are not fully contained in $\S_\infty$; we refer to these as the \emph{non-degenerate} components.
This correspondence associating $\ord_E$ to $C$ defines an injective map 
\[
\cN_X \colon \{\text{non-degenerate arc components}\} 
\to \{\text{essential divisorial valuations}\},
\]
called the \emph{Nash map}, from the set of non-degenerate arc components through the singularities of $X$
to the set of a special class of valuations on $X$, 
called \emph{essential divisorial valuations} \cite{Nas95,IK03}. 
When $X$ is a surface, these are precisely
the valuations defined by the exceptional divisors on the minimal resolution. 

At its core, the Nash problem aims to understand the image of this map. 
If $(X,0)$ is a surface singularity, then the question is whether 
the image of the Nash map $\cN_X$ is the set of divisorial
valuations defined by the exceptional divisors on the minimal resolution of singularities
of $(X,0)$.
As we already discussed in the introduction, \cite{FdBPP12} solves this problem
in characteristic zero. 

Arc components also relate to jet components. This was studied in \cite{dFW24}, where an
injective map
\[
\Psi_{X,m} \colon \{\text{non-degenerate arc components}\} \to \{\text{$m$-jet components}\}
\]
from the set of non-degenerate arc components
through the singular locus $\S$ of $X$ 
 to the set of $m$-jet components through $\S$ is defined for all $m \gg 1$. 
This map sends a non-degenerate arc component $C$ to the unique $m$-jet component $D$
containing the image of $C$ under the truncation map $X_\infty \to X_m$ 
\cite[Theorem 4.1]{dFW24}. The characteristic is there assumed to be
zero, but the construction of $\Psi_{X,m}$ works in all characteristics. The 
definition of $\Psi_{X,m}$ extends to all $m$ as a multivalued function. 

The function $\Psi_{X,m}$ is related to the directed graph $\G_X$ 
determined by all jet components as $m$ varies. This graph was studied in 
\cite{Mou14,Mou17,CM21}, and is defined as follows:
vertices $v$ correspond to jet components through $\S \subset X$
and are labelled by the order $m$ of the jets, 
and an edge is drawn from a vertex $v$ of order $m$ to a vertex $v'$ of order $m+1$ whenever
the component corresponding to $v'$ maps into the one corresponding to $v$ under the
truncation map $X_{m+1} \to X_m$. 
We refer to $\G_X$ as the \emph{graph of jet components} through the singular
locus $\S$ of $X$. 

It is natural to connect the Nash problem to properties of the graph $\G_X$. 
With this in mind, we introduce some terminology. 
A \emph{branch} of $\G_X$ stemming from a vertex $v$ is the directed subgraph of $\G_X$ obtained
by removing all vertices that are not reachable by $v$. 
A \emph{directed chain} in $\G_X$ is a connected directed subgraph of $\G_X$ such that
every vertex in it has at most one edge stemming from it. 
A branch is said to be \emph{simple} if it is a directed chain in $\G_X$.
A subgraph of $\G_X$ is \emph{unbounded} if it contains vertices of arbitrarily large order.
We regard unbounded subgraphs of $\G_X$ up to equivalence, 
where any two such subgraphs are said to be equivalent if they agree 
in large enough order (i.e., after removing all vertices of oder $< m$ 
for some $m$). 

In view of the injectivity and well-definedness of $\Psi_{X,m}$ for $m \gg 1$, 
each arc component through $\S \subset X$ determines a unique (up to equivalence) unbounded directed chain
in $\G_X$, hence we have an injective map
\[
\Psi_X \colon \{\text{non-degenerate arc components}\} \to \{\text{unbounded directed chains}\}
\]
which sends an arc component $C$ to the unbounded directed chain in $\G_X$
whose vertex of order $m$, for $m \gg 1$, corresponds to the $m$-jet component given by $\Psi_{X,m}(C)$. 

In some cases (e.g., when $\Psi_{X,m}$ is surjective for $m \gg 1$), $\Psi_X$ is surjective and
every unbounded directed chain in $\G_X$ is equivalent to an unbounded simple branch, 
but the latter property fails in general (e.g., for non-canonical toric surface singularities \cite{Mou17}) 
and it is unclear whether the first should hold in general.

These properties are closely related
to the Nash problem. Knowing the image of the Nash map can help determine when 
these properties hold (see, e.g., \cite{dFW24}). 
Conversely, computing jet components and understanding the image 
of $\Psi_X$ provides a way to determine the image of the Nash map. 
The latter is our approach to prove \cref{t:Nash-DV}. 
We summarize our main result in the next statement, which includes \cref{t:Nash-DV}.

\begin{theorem}
\label{t:main-DV}
Let $(X,0)$ be a two-dimensional rational double point defined over an algebraically closed field $k$. Then:
\begin{enumerate}
\item
The map $\cN_X$ from the set of non-degenerate arc components through $0 \in X$
to the set of valuations defined by the exceptional divisors on the minimal resolution
of $X$ is surjective, hence a bijection. 
\item
The map $\Psi_{X,m}$ from the set of arc-components to the set of $m$-jet components 
through $0 \in X$ is surjective, hence a bijection, 
for every $m \gg 1$ (and for every $m$ as a multivalued function).
\item
The map $\Psi_X$ from the set of arc-components through $0 \in X$ to the set 
of equivalent classes of unbounded directed chains in $\G_X$ is surjective, hence a bijection, 
for every $m$. Furthermore, 
every unbounded directed chain is equivalent to an unbounded simple branch of $\G_X$. 
\end{enumerate}
\end{theorem}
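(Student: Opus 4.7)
The plan is to establish all three parts simultaneously by producing, for each isomorphism type of rational double point in each characteristic, a complete list of non-degenerate arc components through the singularity together with the exceptional divisor on the minimal resolution that each one dominates. Since the Nash map $\cN_X$ is injective in general, and $\Psi_{X,m}$ is known to be injective for $m \gg 1$ by \cite{dFW24}, it suffices in each case to exhibit at least $r$ non-degenerate arc components, where $r$ is the rank of the associated Dynkin diagram (equivalently, the number of exceptional divisors on the minimal resolution). A bijective matching with the exceptional valuations then yields (1); the bijectivity of $\Psi_{X,m}$ in (2) follows because its image contains the truncation of each of those $r$ arc components, and these truncations must lie in distinct $m$-jet components by injectivity; and (3) is read directly off the inductive construction.

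The heart of the proof is a direct computation of the components of the jet scheme $X_m$ over the singular point, carried out inductively in $m$. For each Du Val normal form, I would expand the defining equation $f \in k[x,y,z]$ into its Hasse--Schmidt derivatives $f_0, f_1, \dots, f_m$ in the variables $x_i, y_i, z_i$, and analyze the scheme cut out by $(f_0, \dots, f_m, x_0, y_0, z_0)$ by a branching analysis on the smallest indices at which the coordinate functions acquire nonzero leading coefficients. In the spirit of \cite{Mou14}, at each branching I would replace the ideal by its radical, which both prevents an unbounded growth in complexity and produces a finite tree of candidate components that stabilizes after finitely many levels. The induction terminates once the number of maximal components at level $m$ equals $r$ and the truncation $X_{m+1}^0 \to X_m^0$ restricts to a bijection on these components, a condition that the explicit equations allow one to verify.

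Once the stable jet components have been isolated, I would match each one with a divisorial valuation on the minimal resolution. Concretely, for a generic jet in a given stable component I would compute the orders of vanishing of $x,y,z$ and compare with the orders $\ord_E(x),\ord_E(y),\ord_E(z)$ as $E$ ranges over the exceptional divisors of the minimal resolution; these values are classically known for rational double points. This identifies each stable jet component as the truncation of the closure $\ov{f_\infty(\p_Y^{-1}(E))}$ for a unique exceptional divisor $E$, producing the desired non-degenerate arc components. Combined with the injectivity statements recalled above, this gives (1) and (2). The assertion in (3) that each unbounded directed chain is equivalent to an unbounded simple branch is immediate from the inductive construction, since at every sufficiently large level each stable component has exactly one preimage among the stable components at the next level.

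The main obstacle, in light of Reguera's counterexamples to the wedge lifting problem, is keeping the inductive computation under control in small characteristic, particularly for $E_8$ in characteristics $2,3,5$, where the defining equation and its Hasse--Schmidt expansion interact in ways that do not occur in large characteristic. I plan to circumvent this by treating each small characteristic separately and exhibiting, in each case, a uniformly bounded number of patterns into which the radicals of the intermediate ideals fall, so that the branching tree stabilizes at a uniformly bounded depth. The cases of $A_n$, $D_n$, and large-characteristic $E_6, E_7, E_8$ should serve as a template; the deviations in small characteristic will amount to replacing individual monomials and rerunning the same bookkeeping, with a final tally of stable components that in every case matches the Dynkin rank.
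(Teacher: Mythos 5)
Your overall strategy (explicit Hasse--Schmidt computations per normal form, radicals to tame the ideals, a count of components compared against the Dynkin rank, and injectivity of $\cN_X$ and $\Psi_{X,m}$ to convert the count into bijectivity) is the same general framework as the paper's, but the two steps on which your argument actually rests are not established, and they are precisely where the difficulty lies. First, your termination criterion --- stop once the number of maximal components of $X_m^0$ equals $r$ and the truncation $X_{m+1}^0 \to X_m^0$ induces a bijection on them --- is a finite-level check and does not control what happens for all larger $m$, nor does it show that each of these jet components is dominated by truncated arcs. Distinguishing $r$ components \emph{at the arc level} (equivalently, surjectivity of $\Psi_{X,m}$) in principle involves infinitely many equations $f_m=0$, and no one-step (or finitely-many-step) stabilization statement substitutes for that. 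The paper resolves this by never stopping at a finite level: on suitable localizations it uses \emph{all} the remaining equations at once to eliminate the higher-order variables (the mechanism of Propositions 3.1--3.2), so each candidate $C_i^\o$ is described in closed form inside $X_\infty$, and non-containment of the closures $C_i$ is then proved directly by exhibiting explicit functions (e.g.\ $y_2$, $z_5$, $x_4$ in the $D_{2n}$ and $E_8$ arguments) that vanish identically on one candidate but not on another.

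Second, your matching step --- identifying a stable jet component with $\ov{f_\infty(\p_Y^{-1}(E))}$ by comparing the orders of vanishing of $x,y,z$ on a generic jet with $\ord_E(x),\ord_E(y),\ord_E(z)$ --- is not a proof. Equality of these numerical invariants neither shows that the generic point of the jet component lifts to an arc, nor that the truncation of the family attached to $E$ is dense in that component; a proper closed subset of a different component can have the same contact orders. In characteristic zero one could try to repair this with dimension/discrepancy estimates for contact loci, but those are exactly the estimates that fail in positive characteristic because of wild ramification (this is why the argument of \cite{dFD16} breaks down, as the paper notes), so the step cannot be waved through. Note also that once the arc-level count of components is in hand, no matching with specific divisors is needed at all: since the singular locus is a point, every component of $X_\infty^0$ is non-degenerate, and $r$ pairwise non-contained components inject under $\cN_X$ into the $r$ exceptional valuations, forcing bijectivity. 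So the divisor-by-divisor identification you propose is both unjustified as written and unnecessary, while the genuinely hard point --- ruling out containments among the candidate components in $X_\infty$, uniformly in all the equations --- is left unaddressed in your proposal.
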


The proof of \cref{t:main-DV} is contained in the following sections,
which are organized depending on the type of singularity. 
We summarize in the next section some general principle that will guide our computations. 
We are hoping that a similar approach may be applied
in the future to study the Nash maps in 
other situations as well, including higher dimensional cases.

\begin{notation}
The following notation will be used in the proof of \cref{t:main-DV}.
Let $I$ and $J$ be ordered index sets. 
For a set of variables $\{u_i \mid i \in I\}$ and $m\in I$, 
we use the short-hand notation $u_{\le m}$ to denote the set $\{u_i \mid i \le m\}$
and $u_{> m}$ to denote $\{u_i \mid i > m\}$. 
For any $j \in I$ with $j < m$, we denote the set
$\{u_i \mid i \le m, i \ne j\}$ by $u_{\le m} \setminus u_j$. 
In a polynomial ring $R = A[u_i,v,w_j \mid (i,j) \in I \times J]$, 
given $(m,n) \in I \times J$ we write
$V(u_{\le m})_v \mid_{w_j = P_j,\;j \ge n}$
to denote a subset of $\Spec R$ obtained by first setting $u_i = 0$ for all $i \le m$, 
then localizing at $v$, and finally eliminating the variables $w_j$, for $j \ge n$, 
using equations $w_j = P_j$ where $P_j \in A[u_{> m},v,w_{> j}]_v$. 
Finally, given a nonzero divisor $f$ and an ideal $\fa$ in a ring $R$, 
we denote by $D(f) := \Spec R_f$ the localization at $f$, and
use the symbols $Z(\fa)$ and $V(\fa)$ to denote the subscheme and the
(reduced) zero locus of $\fa$ in $\Spec R$. 
\end{notation}

\section{Computational strategy}

Let $f \in k[x, y, z]$ be a polynomial, and let $f_i$ denote its $i$-th Hasse–Schmidt derivatives. 
We summarize two key observations that are implicitly used
in our analysis of the fiber $X_\infty^0$ when $X = \{f=0\} \subset \A^3$
is a surface with a rational double point at $0$. Similar properties hold for polynomials in any number of variables.

\begin{proposition}
\label{prop:modulo-rule}
Let 
\[
f = \sum_{(a,b,c) \in I} r_{a,b,c} \. x^ay^bz^c
\]
with $r_{a,b,c} \in k$, 
and assume that the characteristic is either zero or larger
than any of the exponents $a,b,c$ appearing in $f$. 
Suppose that 
$f_0, f_1, \ldots, f_{n-1} \in (x_{\le i-1}, y_{\le j-1}, z_{\le h-1})$.
Then the next derivative satisfies
\[
f_n \equiv f|_J(x_i, y_j, z_h) \mod (x_{\le i-1}, y_{\le j-1}, z_{\le h-1}),
\]
for some (possibly empty) $J \subset I$, where
\[
f|_J := \sum_{(a,b,c) \in J} r_{a,b,c} \. x^ay^bz^c.
\]
That is, $f_n$, modulo lower-order derivatives, is congruent to $f|_J$ 
evaluated at the variables $x_i, y_j, z_h$. Importantly, each variable appears only in one specific order of its derivative; for instance, only 
$x_i$ appears and not $x_{i+1}, x_{i+2},\ldots, x_n.$
\end{proposition}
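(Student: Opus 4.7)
The plan is to pass to the quotient ring
\[
S := R_m/(x_{\le i-1}, y_{\le j-1}, z_{\le h-1}),
\]
in which the hypothesis becomes the vanishing of the images $\bar f_0,\dots,\bar f_{n-1}\in S$; note that $S$ is a polynomial $k$-algebra in the variables $\{x_k\}_{k\ge i}\cup\{y_k\}_{k\ge j}\cup\{z_k\}_{k\ge h}$. Working in $S[[t]]$ with the generic jet $\~x := \sum_{k\ge i} x_k\, t^k$, $\~y := \sum_{k\ge j} y_k\, t^k$, $\~z := \sum_{k\ge h} z_k\, t^k$, the very definition of HS derivatives gives the identity $f(\~x,\~y,\~z) = \sum_{k\ge 0} \bar f_k\, t^k$ in $S[[t]]$.

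The heart of the argument is a weight bookkeeping. To each monomial $x^a y^b z^c$ of $f$ attach the weight $w(a,b,c) := ai + bj + ch$, and expand
\[
\~x^a \~y^b \~z^c = x_i^a y_j^b z_h^c \cdot t^{w(a,b,c)} + O\bigl(t^{w(a,b,c)+1}\bigr),
\]
where every term of $t$-order strictly greater than $w(a,b,c)$ involves at least one variable $x_k$, $y_k$, or $z_k$ with $k$ strictly larger than $i$, $j$, or $h$ respectively. Setting $w_{\min} := \min_{(a,b,c)\in I} w(a,b,c)$, I claim that the hypothesis forces $n\le w_{\min}$. Indeed, for $k < w_{\min}$ no monomial contributes to the $t^k$-coefficient, so $\bar f_k=0$ is automatic, whereas
\[
\bar f_{w_{\min}} = \sum_{w(a,b,c)=w_{\min}} r_{a,b,c}\, x_i^a y_j^b z_h^c
\]
is nonzero in $S$ because the monomials on the right are pairwise distinct in the polynomial ring $S$ and $r_{a,b,c}\ne 0$ for every $(a,b,c)\in I$. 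Thus if $n>w_{\min}$ the hypothesis would kill this element, a contradiction.

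With $n\le w_{\min}$ in hand the conclusion reads off. If $n<w_{\min}$, take $J=\emptyset$, in which case $\bar f_n=0 = f|_\emptyset(x_i,y_j,z_h)$. If $n=w_{\min}$, only weight-$n$ monomials can contribute to $\bar f_n$, and each contributes only through its leading $t$-term, so
\[
\bar f_n = \sum_{(a,b,c)\in J} r_{a,b,c}\, x_i^a y_j^b z_h^c = f|_J(x_i, y_j, z_h),\qquad J := \{(a,b,c)\in I : w(a,b,c)=n\}.
\]
In either case only the three variables $x_i, y_j, z_h$ appear, which is the emphasized point of the proposition.

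The main step requiring care will be the leading-term bookkeeping: verifying cleanly that the $t^{w(a,b,c)}$-coefficient of $\~x^a\~y^b\~z^c$ is exactly $x_i^a y_j^b z_h^c$ with coefficient $1$, and that no term of higher $t$-order from any factor can avoid introducing some variable $x_{k>i}$, $y_{k>j}$, or $z_{k>h}$. This is routine once the lowest-$t$ terms $x_i t^i$, $y_j t^j$, $z_h t^h$ of $\~x,\~y,\~z$ are tracked factor by factor; the leading coefficient is $1$ in any characteristic. The hypothesis on the characteristic does not appear essential for the argument above; I read it as a convenience ensuring that various lower-order coefficients carrying multinomial factors remain nonzero in the paper's downstream computations that invoke this statement.
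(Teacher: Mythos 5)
Your argument is correct, but it takes a genuinely different route from the paper's. The paper proves the statement by contradiction inside the ring of Hasse--Schmidt variables: if $f_n$, reduced modulo $(x_{\le i-1},y_{\le j-1},z_{\le h-1})$, contained a monomial involving a variable of too high an index, say $u\cdot x_{i+\ell}^{\alpha}x_i^{\beta}M$, then the lower-order derivative $f_{n-\alpha\ell}$ would contain $v\cdot x_i^{\alpha+\beta}M$ with $u=v\binom{\alpha+\beta}{\alpha}$, contradicting the hypothesis; the assumption on the characteristic is used precisely to guarantee that this binomial coefficient does not vanish. You instead pass to the quotient $S$, use the generating-series definition $f(\widetilde{x},\widetilde{y},\widetilde{z})=\sum_k \bar f_k t^k$ with $\widetilde{x}=\sum_{k\ge i}x_kt^k$, etc., and run a weight argument with $w(a,b,c)=ai+bj+ch$: the hypothesis forces $n\le w_{\min}$, because $\bar f_{w_{\min}}$ is a sum of distinct monomials $x_i^a y_j^b z_h^c$ with leading coefficients $1$, so no cancellation is possible; and then $\bar f_n$ is either $0$ or exactly the sum of the weight-$n$ terms, i.e.\ $f|_J(x_i,y_j,z_h)$. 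Your approach buys two things the paper's proof does not make explicit: it identifies $J$ concretely (the monomials of minimal weight), rather than only excluding higher-index variables, and it shows that the characteristic hypothesis is not needed for this particular proposition, confirming your closing remark---that hypothesis matters for the paper's method (the binomial coefficient) and for the companion Proposition~\ref{prop:modulo-smooth}, where partial derivatives such as $\partial g/\partial x_i$ may vanish identically in small characteristic, not for the truth of the present statement. When writing it up, tie down two small points: assume (or arrange) that $I$ indexes only monomials with nonzero coefficient, so that $\bar f_{w_{\min}}\ne 0$ is legitimate, and dispose of the trivial case $f=0$; the ``routine bookkeeping'' you defer is indeed just the observation that the lowest-order term of $\widetilde{x}^a\widetilde{y}^b\widetilde{z}^c$ is $x_i^a y_j^b z_h^c\,t^{ai+bj+ch}$ and that every higher-order term involves some variable of strictly larger index.
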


\begin{proof}
Suppose, for contradiction, that after reducing modulo $(x_{\le i-1}, y_{\le j-1}, z_{\le h-1})$, 
the derivative $f_n$ remains nonzero and contains a monomial involving a variable of 
strictly higher order than those in $\{x_i, y_j, z_h\}$.
Without loss of generality, assume there is a variable  $x_{i+\ell}$ for some $\ell \ge 1$, 
appearing in a monomial of $f_n$ of the form
$$
u \cdot x_{i + \ell}^\alpha \cdot x_i^\beta \cdot M,
$$
where $u \in k$, $\alpha > 0$, $\beta\geq0 $, and $M$ is a monomial in 
$k[x_{i'}, y_{j'}, z_{h'} \mid i' \ge i+\ell+1, j' \ge j, h' \ge h]$. 
In other words, $\ell$ is chosen to be the smallest positive integer such that $x_{i+\ell}$ appears in 
this monomial of $f_n$.
Then $f_{n-\alpha \ell}$ contains a monomial of the form
\[
v\. x_i^{\alpha+\beta} \cdot M,
\]
where $v \in k$ differs from $u$ only by factors given by 
binomial coefficients. Specifically, we have $u = v \. \binom{\a+\b}{\a}$.
It follows that if $u \ne 0$ then $v \ne 0$. 
This gives a contradiction to our assumption that
$f_{n-\alpha \ell} \in (x_{\le i-1}, y_{\le j-1}, z_{\le h-1})$.
\end{proof}

\begin{proposition}
\label{prop:modulo-smooth}
With the same assumptions and notation as in Proposition~\ref{prop:modulo-rule}, 
suppose that $f|_J \ne 0$ and let $g := f|_J(x_i,y_j,z_h)$. 
Consider the scheme $Z := Z(x_{\le i-1}, y_{\le j-1}, z_{\le h-1}, g)$,
and let $Z_\sm$ denote the smooth locus of $Z$, i.e.,
\[
Z_\sm = Z \cap \Big(D \Big( \frac{\partial g}{\partial x_i} \Big) \cup 
D\Big( \frac{\partial g}{\partial y_j} \Big) \cup 
D\Big( \frac{\partial g}{\partial z_k} \Big) \Big).
\]
Then on each of the open sets 
$Z \cap D \big( \frac{\partial g}{\partial x_i} \big)$, 
$Z \cap D\big( \frac{\partial g}{\partial y_j} \big)$, and 
$Z \cap D\big( \frac{\partial g}{\partial z_k} \big)$, 
we can use the remaining equations $f_{n+\ell} = 0$, 
for $\ell \ge 0$, to eliminate the variables
$x_{i+\ell}$, $y_{j+\ell}$, or $z_{h+\ell}$, respectively. 
\end{proposition}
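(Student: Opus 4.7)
The plan is to extend the generating-function computation underlying \cref{prop:modulo-rule} by one additional power of $t$. Working modulo the ideal $\fa := (x_{\le i-1}, y_{\le j-1}, z_{\le h-1})$ and writing $x(t) = t^i \bar x(t)$, $y(t) = t^j \bar y(t)$, $z(t) = t^h \bar z(t)$ with $\bar x(t) = x_i + x_{i+1}t + \cdots$ (and analogously for $\bar y, \bar z$), one has
\[
\sum_m f_m\, t^m \equiv \sum_{(a,b,c)\in I} r_{a,b,c}\, t^{ai+bj+ch}\, \bar x(t)^a \bar y(t)^b \bar z(t)^c \pmod{\fa}.
\]
The vanishing of $f_0,\ldots,f_{n-1}$ modulo $\fa$ forces $ai+bj+ch \ge n$ for every $(a,b,c) \in I$ (the monomials $x_i^a y_j^b z_h^c$ being pairwise distinct), and the coefficient of $t^n$ recovers $g$, reproving \cref{prop:modulo-rule} with $J = \{(a,b,c)\in I : ai+bj+ch = n\}$.

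Next, for $\ell \ge 1$, I would extract the coefficient of $t^{n+\ell}$ and split it according to whether $(a,b,c)$ lies in $J$ or in $I\setminus J$. For $(a,b,c) \in J$, a direct multinomial expansion yields
\[
[t^\ell]\, \bar x(t)^a\bar y(t)^b\bar z(t)^c = a\, x_i^{a-1}x_{i+\ell}\, y_j^b z_h^c + b\, x_i^a y_j^{b-1}y_{j+\ell}\, z_h^c + c\, x_i^a y_j^b z_h^{c-1}z_{h+\ell} + R_{a,b,c,\ell},
\]
where $R_{a,b,c,\ell} \in k[x_i,\dots,x_{i+\ell-1},\, y_j,\dots,y_{j+\ell-1},\, z_h,\dots,z_{h+\ell-1}]$; summing over $J$ collects the coefficients of $x_{i+\ell}, y_{j+\ell}, z_{h+\ell}$ into exactly $\partial g/\partial x_i, \partial g/\partial y_j, \partial g/\partial z_h$. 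For $(a,b,c)\in I\setminus J$ with $ai+bj+ch = n+m$, $1 \le m \le \ell$, the relevant contribution is $[t^{\ell-m}]\,\bar x^a\bar y^b\bar z^c$, which involves only variables of index strictly less than $i+\ell$, $j+\ell$, $h+\ell$. Therefore, modulo $\fa$,
\[
f_{n+\ell} = \frac{\partial g}{\partial x_i}\,x_{i+\ell} + \frac{\partial g}{\partial y_j}\,y_{j+\ell} + \frac{\partial g}{\partial z_h}\,z_{h+\ell} + Q_\ell,
\]
with $Q_\ell \in k[x_i,\dots,x_{i+\ell-1},\, y_j,\dots,y_{j+\ell-1},\, z_h,\dots,z_{h+\ell-1}]$.

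The conclusion then follows by inspection: on $Z \cap D(\partial g/\partial x_i)$, the equation $f_{n+\ell}=0$ can be solved inductively in $\ell \ge 1$ for $x_{i+\ell}$ in the form $x_{i+\ell} = P_\ell$, with $P_\ell$ in the localization of $k[x_i,\dots,x_{i+\ell-1},y_j,\dots,y_{j+\ell-1},z_h,\dots,z_{h+\ell-1}]$ at $\partial g/\partial x_i$, while the equation $f_n = 0$ is precisely $g = 0$ and is already built into the definition of $Z$. An identical argument on $D(\partial g/\partial y_j)$ and $D(\partial g/\partial z_h)$ eliminates $y_{j+\ell}$ and $z_{h+\ell}$ respectively.

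The main technical obstacle I anticipate is certifying the coefficient identities and ruling out any spurious contribution to $x_{i+\ell}, y_{j+\ell}, z_{h+\ell}$ from tuples in $I\setminus J$. This is controlled by the index bookkeeping above, together with the characteristic hypothesis inherited from \cref{prop:modulo-rule}, which ensures that the integer multinomial coefficients $a, b, c$ appearing in the linear terms are nonzero in $k$.
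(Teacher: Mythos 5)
Your proof is correct and follows essentially the same route as the paper: everything rests on the congruence $f_{n+\ell} \equiv \frac{\partial g}{\partial x_i}\,x_{i+\ell} + \frac{\partial g}{\partial y_j}\,y_{j+\ell} + \frac{\partial g}{\partial z_h}\,z_{h+\ell} + Q_\ell$ modulo $(x_{\le i-1},y_{\le j-1},z_{\le h-1})$ with $Q_\ell$ depending only on lower-index variables, which is exactly the formula the paper's proof asserts --- you simply make explicit, via coefficient extraction from the generating series, the verification the paper leaves implicit (and in passing re-derive Proposition~\ref{prop:modulo-rule} with $J=\{(a,b,c)\in I : ai+bj+ch=n\}$). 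One harmless slip: your solved expression $P_\ell$ for $x_{i+\ell}$ also involves $y_{j+\ell}$ and $z_{h+\ell}$, not only variables of index $<j+\ell$ and $<h+\ell$; this does not affect the elimination, since those variables are not among the ones being eliminated on that chart.
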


\begin{proof}
For any $\ell \ge 1$, we observe that, modulo 
$(x_{\le i-1}, y_{\le j-1}, z_{\le h-1})$, 
the $(n+\ell)$-th Hasse–Schmidt derivative of $f$ has the form
\begin{multline*}
f_{n+\ell} \equiv 
\frac{\partial g}{\partial x_i}(x_i, y_j, z_h) \cdot x_{i+\ell} +
\frac{\partial g}{\partial y_j}(x_i, y_j, z_h) \cdot y_{j+\ell} +
\frac{\partial g}{\partial z_k}(x_i, y_j, z_h) \cdot z_{h+\ell} \\
+ G_{n,\ell}(x_{i'}, y_{j'}, z_{h'}) \mod (x_{\le i-1}, y_{\le j-1}, z_{\le k-1}),
\end{multline*}
where 
$G_{n,\ell} \in k[x_{i'},y_{j'},z_{h'} \mid i\le i' < i+\ell, j\le j' < j+\ell, k\le k' < k+\ell]$.
The result easily follows from this. 
\end{proof}

\begin{remark}
Implicit in \cref{prop:modulo-smooth} is the assumption that $Z_\sm$ is nonempty, 
as otherwise the statement is vacuous. 
If $Z$ is generically smooth, then \cref{prop:modulo-smooth}
provides a way of computing the inverse image of its smooth locus in $X_\infty$. 
The proposition does not say anything, however, if these schemes are
non-reduced. In our setting, when this occurs in the proof of \cref{t:main-DV}, 
then $g$ typically consists of a single monomial. Those situations are treated by
setting each variable appearing in the monomial equal to zero, 
and go one step up in the jet schemes to continue the computations. 
\end{remark}

\section{$A_n$ singularities}

In this section we verify \cref{t:main-DV} for $A_n$ singularities.
The computation of jet and arc components in this case is straightforward
and completely characteristic free. We include it here for completeness and
to gives an idea in this simple case of how the argument will be organized in the other cases. 

Let $(X,0)$ be an $A_n$ singularity, with $n \ge 1$.
We may assume that $X$ is defined in $\A^3$ by 
\[
f(x,y,z) = z^{n+1} + xy.
\]

\setcounter{step}{0}
\begin{step}
For $i = 1,\dots n-1$, we set 
$V_i := V(x_{\le i-1},y_0,z_0) \subset \A^3_\infty$.
Inductively, we see that $V_i \subset V(f_0,\dots,f_{i})$.
We define 
$W_i := V_i \cap V(f_{i+1})$. 
Note that
\[
f_{i+1} \equiv x_iy_1   \quad \mod (x_{\le i-1},y_0,z_0).
\]
We stratify 
$W_i = (W_i \cap D(x_{i})) \sqcup (W_i \cap V(x_{i}))$.
On $W_i \cap D(x_{i})$, we have $y_1 = 0$, and 
we use the equations $f_m = 0$, for $m \ge i+1$,
and the fact that $x_{i}$ is a unit on this set, to eliminate the variables $y_{m-i}$. 
The first $n-i$ of these equations give $y_1 = \dots = y_{n-i} = 0$, and we obtain the set
\[
C_i^\o = V(x_{\le i-1},y_{\le n-i},z_0)_{x_{i}}|_{y_{m-i} = (x_i)^{-1}P_{i,m},\; m \ge n+1}
\]
where the polynomials $P_{i,m}$ do not depend on $y_j$ for any $j \ge m-i$.
We denote by $C_i \subset X_\infty$ the closure of $C_i^\o$. 

We are left with 
$W_i \cap V(x_{i}) = V(x_{\le i},y_0,z_0)$.
If $i < n-1$, then this is precisely $V_{i+1}$, and we apply induction. 
If $i = n-1$, then we end up with 
$V_n := W_{n-1} \cap V(x_{n-1}) = V(x_{\le n-1},y_0,z_0)$.

Let $W_n := V_n \cap V(f_{n+1})$.
We have
\[
f_{n+1} \equiv z_1^{n+1} + x_ny_1   \quad \mod (x_{\le n-1},y_0,z_0).
\]
We cover $W_n$ with $D(x_n) \cup D(y_1) \cup V(x_n,y_1)$. 
On $W_n \cap D(x_n)$, we use the equations $f_{n+j} = 0$ for $j \ge 1$ to eliminate $y_j$.
This produces $C_n^\o$.  
On $W_n \cap D(y_1)$, we use the remaining equations to solve for $x_j$ for $j \ge n$.
The resulting set is irreducible and agree with $C_n^\o$ on $D(x_n) \cap D(y_1)$. 
Finally, on $W_n \cap V(x_n,y_1)$ we automatically have $z_1 = 0$, 
and we end up with a set contained in the closure of $C_n^\o$. 
This shows that $W_n \cap V(f_m \mid m \ge n+2)$ is irreducible and agrees with the closure $C_n$
of $C_n^\o$. 
\end{step}

\begin{step}
To conclude the proof, it suffices to note that the sets $C_1,\dots,C_n$ are irreducible, they 
fill up the fiber $X_\infty^0$, and there are no containments among any two of them, namely, 
$C_i \not\subset C_j$ whenever $i \ne j$. The first two assertions are clear from the construction, and
the last assertion follows immediately by comparing the defining equations of the sets $C_i^\o$.
\end{step}

\section{$D_{n}$ singularities}

In this section we look at $D_n$ singularities.
We give the proof of \cref{t:main-DV} in the even case $D_{2n}$. The proof for $D_{2n+1}$
singularities is similar and is omitted.

Let $(X,0)$ be a $D_{2n}$ singularity, with $n \ge 2$. 
We may assume that $X$ given in $\A^3$ by
\[
f(x,y,z) = z^2 + x^2y + xy^n + h(x,y,z) = 0,
\]
where $h \in \{0, xy^{n-r}z \mid 1 \le r \le n-2\}$ if the characteristic is 2, and $h = 0$ otherwise. 

The proof goes by 
constructing the locally closed sets 
$C_1^\o,\dots,C_{2n-2}^\o,C_{n-1}'^\o,C_{n-1}''^\o$
using the equations of $X_\infty^0$, and prove that their closures
$C_1,\dots,C_{2n-2},C_{n-1}',C_{n-1}''$ are the irreducible components of $X_\infty^0$. 

The construction breaks into four steps. First, we compute the
first $n-2$ sets $C_1^\o,\dots,C_{n-2}^\o$ by looking at the jet
schemes up to order $2n-3$. 
Next we compute $C_{n-1}^\o$, $C_{n-1}'^\o$ and $C_{n-1}''^\o$ from the jet scheme at level $2n-1$. 
We then compute $C_{n}^\o, \dots, C_{2n-3}^\o$ going up on higher levels,
until we reach level $4n-2$, which produces the last component $C_{2n-2}$. 
By construction, these sets are irreducible and cover all of $X_\infty^0$.
The last step of the proof will be to prove 
that there are no inclusions among these sets.

\setcounter{step}{0}
\begin{step}
For $i = 1,\dots n-2$, we set 
\[
V_i := V(x_{\le i-1},y_0,z_{\le i-1}) \subset \A^3_\infty.
\]
Inductively, we see that $V_i \subset V(f_0,\dots,f_{2i-1})$.
We cut $V_i$ with the next equations $f_{2i} = f_{2i+1} = 0$, and define
\[
W_i := V_i \cap V(f_{2i},f_{2i+1}).
\]
We compute
\[
\begin{cases}
f_{2i} \equiv z_i^2   &\mod (x_{\le i-1},y_0,z_{\le i-1}), \\
f_{2i+1} \equiv x_{i}^2y_1 &\mod (x_{\le i-1},y_0,z_{\le i}).
\end{cases}
\]
This implies that $W_i = V_i \cap V(z_i,x_{i}y_1)$.
We stratify $W_i = (W_i \cap D(x_{i})) \sqcup (W_i \cap V(x_{i}))$.
On $W_i \cap D(x_{i})$, we have $y_1 = 0$. 
For $m \ge 2i+2$, we have
\[
f_m \equiv x_1^2 y_{m-2} + G_{i,m} \mod (x_{\le i-1},y_{\le 1},z_{\le i})
\]
where $G_{i,m}$ does not depend on $y_{j-2}$ for any $j \ge m$. Therefore, 
we use the remaining equations $f_m = 0$, for $m \ge 2i+2$, 
and the fact that $x_{i}$ is a unit on this set, to eliminate the variables $y_{m-2i}$. 
This gives the set 
\[
C_i^\o = V(x_{\le i-1},y_{\le 1},z_{\le i})_{x_{i}}
|_{y_{m-2i} = (x_i^2)^{-1}P_{i,m},\; m \ge 2i+2}
\]
where $P_{i,m} = - g_{i,m}$.
We are left with
\[
W_i \cap V(x_{i}) = V(x_{\le i},y_0,z_{\le i}).
\]
If $i < n-2$, then this is precisely $V_{i+1}$, and we apply induction.
\end{step}

\begin{step}
The previous step gives us the sets $C_1^\o,\dots, C_{n-2}^\o$, 
and leaves us in the end with the set
\[
V_{n-1} := V(x_{\le n-2},y_0,z_{\le n-2}).
\]
Observing that $V_{n-1} \subset V(f_0,\dots,f_{2n-3})$, we define
\[
W_{n-1} := V_{n-1} \cap V(f_{2n-2},f_{2n-1}).
\]
We have
\[
\begin{cases}
f_{2n-2} \equiv z_{n-1}^2   &\mod (x_{\le n-2},y_0,z_{\le n-2}), \\
f_{2n-1} \equiv x_{n-1}^2 y_1 + x_{n-1} y_1^n &\mod (x_{\le n-2},y_0,z_{\le n-1}).
\end{cases}
\]
hence
$W_{n-1} = V_{n-1} \cap V(z_{n-1}, x_{n-1}^2 y_1 + x_{n-1} y_1^n)$.
We stratify $W_{n-1}$ using the stratification
\[
D(x_{n-1}y_1) \sqcup (D(x_{n-1}) \cap V(y_1)) \sqcup (V(x_{n-1}) \cap D(y_1)) \sqcup V(x_{n-1},y_1).
\]
This stratification reflects a decomposition of $W_{n-1}$ into
three irreducible components, cut out, respectively, by the three
equations $x_{n-1} = 0$, $y_1 = 0$, and $x_{n-1} + y_1^n = 0$, 
which we analyze next.

We first look at $W_{n-1} \cap D(x_{n-1}) \cap V(y_1)$. 
For $m \ge 2n$, we have 
\[
f_m \equiv x_{n-1}^2 y_{m-2n + 2} + g_{n-1,m} \mod (x_{\le n-2},y_{\le 1},z_{\le n-1}),
\]
where $g_{n-1,m}$ does not depend on $y_{j-2n+2}$ for any $j \ge m$. 
Thus we can
solve for $y_{m-2n+2}$ using the remaining equations $f_m = 0$ for $m \ge 2n$. This produces 
\[
C_{n-1}^\o = V(x_{\le n-2},y_{\le 1},z_{\le n-1})_{x_{n-1}}
|_{y_{m-2n+2} = (x_{n-1}^2)^{-1}P_{n-1,m},\; m \ge 2n}
\]
where $P_{n-1,m} = - g_{n-1,m}$.

Next, we look at $W_{n-1} \cap D(y_1) \cap V(x_{n-1})$. 
For $m \ge 2n$, we have 
\[
f_m \equiv x_{m-n} y_1^n + G_{n-1,m}' \mod (x_{\le n-1},y_{\le 0},z_{\le n-1}),
\]
where $G_{n-1,m}'$ does not depend on $x_{j-n}$ for any $j \ge m$. 
Thus we can
solve for $x_{m-n}$ using the remaining equations $f_m = 0$ for $m \ge 2n$, obtaining 
\[
C_{n-1}'^\o = V(x_{\le n-1},y_0,z_{\le n-1})_{y_1}
|_{x_{m-n} = (y_1^n)^{-1}P_{n-1,m}',\; m \ge 2n},
\]
where $P_{n-1,m}' = - G_{n-1,m}'$.

We now analyze at the stratum 
$W_{n-1} \cap D(x_{n-1}y_1)$. Note that on this set we have $x_{n-1}+y_1^{n-1} \equiv 0$, 
a relation that is used without warning in the next computation.
For $m \ge 2n$, we have
\[
f_m \equiv x_{n-1}x_{m-n}y_1 + G_{n-1,m}'' \mod (x_{\le n-2},y_{\le 0},z_{\le n-1})
\]
where $G_{n-1,m}''$ does not depend on $x_{j-n}$ for any $j \ge m$.
We use the equations $f_m = 0$ to solve 
for $x_{m-n}$ for $m \ge 2n$, by inverting $2 x_{n-1}y_1$. This yields
\[
C_{n-1}''^\o = V(x_{\le n-2},y_0,z_{\le n-1},x_{n-1}+y_1^{n-1})_{x_{n-1}y_1}
|_{x_{m-n} = (x_{n-1}y_1)^{-1}P_{n-1,m}'',\; m \ge 2n}
\]
where
$P_{n-1,m}'' = - G_{n-1.m}''$.

We are left with
$W_{n-1} \cap V(x_{n-1},y_1)$. Note that this is equal to $V(x_{\le n-1},y_{\le 1},z_{\le n-1})$.
We cut with with the next equation $f_{2n} = 0$, and define
\[
V_n := W_{n-1} \cap V(x_{n-1},y_1,f_{2n}).
\]
Observing that 
\[
f_{2n} \equiv z_n^2   \quad \mod (x_{\le n-1},y_0,z_{\le n-1}),
\]
we compute
$V_n = V(x_{\le n-1},y_{\le 1},z_{\le n})$.
\end{step}

\begin{step}
For $i = n,\dots 2n-3$, consider the set 
\[
V_i := V(x_{\le i-1},y_{\le 1},z_{\le i}).
\]
Note that, for $i = n$, the definition of this set agrees with the set $V_n$
defined at the end of the previous step. 
Inductively, we see that $V_i \subset V(f_0,\dots,f_{2i})$.
We define
\[
W_i := V_i \cap V(f_{2i+1},f_{2i+2}).
\] 
Note that
\[
\begin{cases}
f_{2i+1} \equiv 0 &\mod (x_{\le i-1},y_{\le 1},z_{\le i}),\\
f_{2i+2} \equiv z_{i+1}^2 + x_{i}^2 y_2 &\mod (x_{\le i-1},y_{\le 1},z_{\le i}).
\end{cases}
\]
Therefore
$W_i = V_i \cap V(z_{i+1}^2 + x_{i}^2 y_2)$.
We stratify 
$W_i = (W_i \cap D(x_{i})) \sqcup (W_i \cap V(x_{i}))$.
On $W_i \cap D(x_{i})$, we have
\[
f_m \equiv x_i^2 y_{m-2i} + G_{i,m} \mod (x_{\le i-1},y_{\le 1},z_{\le i})
\]
for $m \ge 2i+2$, where $G_{i,m}$ does not depend on $y_{j-2i}$ for any $j \ge m$.
We can eliminate the variables $y_{m-2i}$, obtaining
\[
C_i^\o = V(x_{\le i-1},y_{\le 1},z_{\le i})_{x_{i}}
|_{y_{m-2i} = (x_i^2)^{-1}P_{i,m},\; m \ge 2i+2}
\]
where $P_{i,m} = - G_{i,m}$.
We are left with $W_i \cap V(x_{i})$. On this set, we have $z_{i+1} = 0$, hence
\[
W_i \cap V(x_{i}) = V(x_{\le i},y_{\le 1},z_{\le i+1}).
\]
If $i < 2n-3$, then this is precisely $V_{i+1}$, and we apply induction.
\end{step}

\begin{step}
The previous steps have produced the strata $C_1^\o,\dots,C_{2n-3}^\o, C_{n-1}'^\o,C_{n-1}''^\o$, and
we are left with analyzing the set
\[
V_{2n-2} := W_{2n-3} \cap V(x_{2n-3},f_{4n-4}) = V(x_{\le 2n-3},y_{\le 1},z_{\le 2n-2}).
\]
So far, we have used the equations $f_m = 0$ for $m \le 4n-4$. 
We define
\[
W_{2n-2} := V_{2n-2} \cap V(f_{4n-3},f_{4n-2}).
\] 
We have
\[
\begin{cases}
f_{4n-3} \equiv 0 &\mod (x_{\le 2n-3},y_{\le 1},z_{\le 2n-2}),\\
f_{4n-2} \equiv z_{2n-1}^2 + x_{2n-2}^2y_2 + x_{2n-2}y_2^n & \mod (x_{\le 2n-3},y_{\le 1},z_{\le 2n-2}).
\end{cases}
\]
On $W_{2n-2} \cap D(x_{2n-2})$, 
we have
\[
f_m \equiv x_1^2 y_{m-2} + x_{m - 2n-1}y_1^n + G_{2n-3,m} \mod (x_{\le 2n-3},y_{\le 1},z_{\le 2n-2})
\]
for $m \ge 4n-1$, where $G_{2n-3,m}$ does not depend on $y_{j-2}$ nor $x_{j - 2n-1}$
for any $j \ge m$.

We cover $W_{2n-2}$ with $D(x_{2n-2}) \cup D(y_2) \cup V(x_{2n-2},y_2)$.
On $W_{2n-2} \cap D(x_{2n-2})$, we use the equations
$f_m = 0$ to eliminate the variables $y_{m-2}$ for $m \ge 4m-1$, obtaining
\[
C_{2n-2}^\o = V(x_{\le 2n-3},y_{\le 1},z_{\le 2n-2})_{x_{2n-2}}
|_{y_{m-2} = (y_{m-2})^{-1}P_{2n-3,m},\; m \ge 2i+2}
\] 
where $P_{2n-3,m} = - x_{m - 2n-1}y_1^n - G_{2n-3,m}$. 
On $W_{2n-2} \cap D(y_2)$, we eliminate the variables $x_{m-2n-1}$ 
using the same equations $f_m = 0$, but dividing this time by $y_2^n$; 
the resulting set agrees with $C_{2n-2}^\o$ on $D(x_{2n-2}) \cap D(y_2)$.
Finally, on $W_n \cap V(x_{2n-2},y_2)$ we 
end up with something contained in the closure of $C_{2n-2}^\o$.  

This shows that $W_{2n-2} \cap V(f_m \mid m \ge 4n-2)$ is irreducible and agrees with the closure
of $C_{2n-2}^\o$. This realizes the component $C_{2n-2}$.
\end{step}

\begin{step}
Let $C_i,C_{n-1}',C_{n-1}''$ denote the closure of the 
sets $C_i^\o, C_{n-1}'^\o,C_{n-1}''^\o$ constructed above. 
It is clear by construction that these sets are all irreducible
and fill up $X_\infty^0$. The last step is to show that there are no inclusions among them, 
hence $X_\infty^0$ has $2n$ irreducible components. 

By construction, we clearly have that $C_i \not\subset C_j$ whenever $i < j$, 
and $C_i \not\subset C_{n-1}' \cup C_{n-1}''$ for $i \le n-2$. 
Similarly, $C_{n-1} \not\subset C_{n-1}' \cup C_{n-1}''$, 
since $x_{n-1} \not\equiv 0$ on $C_{n-1}$
(showing that $C_{n-1} \not\subset C_{n-1}'$) and $x_{n-1} + y_1^{n-1} \not\equiv 0$
on $C_{n-1}$ (showing that $C_{n-1} \not\subset C_{n-1}''$).
Furthermore, we have $x_{n-1} \equiv 0$ and $x_{n-1} + y_1^{n-1} \not\equiv 0$ on $C_{n-1}'$
while $x_{n-1} \not\equiv 0$ and $x_{n-1} + y_1^{n-1} \equiv 0$ on $C_{n-1}''$, 
hence there are no inclusions among these two sets. 
Note also that $C_{n-1}',C_{n-1}'' \not\subset C_i$ for all $i$ since
$y_1$ is identically zero on $C_i$ and does not vanish identically on either 
$C_{n-1}'$ or $C_{n-1}''$. 

It remains to show that $C_j \not\subset C_i$ for $j > i$
and $C_i \not\subset C_{n-1}' \cup C_{n-1}''$ for $i \ge n$.
Regarding the first, observe that
\[
\begin{cases}
f_{2i + 2} \equiv x_i^2 y_2 &\mod (x_{\le j-1} \setminus x_i,y_{\le 1},z_{\le j}), \\
f_{2j + 2} \equiv x_j^2 y_2 + z_{j+1}^2 &\mod (x_{\le j-1},y_{\le 1},z_{\le j}). 
\end{cases}
\]
Keeping in mind that $x_i$ is a unit on $C_i^\o$, we see from the vanishing of $f_{2i+2}$ 
that $y_2 \equiv 0$ on $C_i \cap V(x_{\le j-1} \setminus x_i,y_{\le 1},z_{\le j})$
On the other hand, using that $x_j$ is a unit on $C_j^\o$ and $z_{j+1}$
is a free variable on this set, we see from the 
the vanishing of $f_{2j+2}$ that $y_2 \not\equiv 0$ on $C_j$. 
As $C_j \subset V(x_{\le j-1} \setminus x_i,y_{\le 1},z_{\le j})$, 
we conclude that $C_j \not\subset C_i$ whenever $j > i$. 

Finally, fix any $C_i$ with $i \ge n$. Note that
\[
f_{2i + 1} \equiv x_i^2 y_1 \mod (x_{\le i-1},y_0,z_{\le i}).
\]
As $y_1$ restricts to a unit on both $C_{n-1}'^\o$ and $C_{n-1}''^\o$, we 
deduce that $x_i \equiv 0$ on $(C_{n-1}' \cup C_{n-1}'') \cap V(x_{\le i-1},y_0,z_{\le i})$. 
On the other hand, $x_i \not\equiv 0$ on $C_i$, and 
since $C_i \subset V(x_{\le i-1},y_0,z_{\le i})$, we conclude that 
$C_i \not\subset C_{n-1}' \cup C_{n-1}''$ for $i \ge n$.
\end{step}

\section{$E_6$, $E_7$, and $E_8$ singularities}

In this section we look at the remaining exceptional cases. 
We present the proof of \cref{t:main-DV} for $E_8$.
The proofs of the other two cases are similar
and are left to the reader. 

We therefore assume that $X$ is given in $\A^3$ by
\[
f(x,y,z) = z^2 + x^3 + y^5 + h(x,y,z),
\]
where $h \in \{0, xy^3z, xy^2z, y^3z, xyz \}$ if $\chr k = 2$,  
$h \in \{0, x^2y^3, x^2y^2 \}$ if $\chr k = 3$,  
$h \in \{0, xy^4\}$ if $\chr k = 5$,
and $h = 0$ otherwise. 

We will construct locally closed sets $C_1^\o,\dots,C_8^\o$
whose closures $C_1,\dots,C_8$ are irreducible and fill up $X_\infty^0$. 
The last step of the proof will be to prove 
that there are no inclusions among these sets.

\setcounter{step}{0}
\begin{step}
We set $V_1 := V(x_0,y_0,z_0)$. 
The first nontrivial equations are
\[
\begin{cases}
f_2 \equiv z_1^2 &\mod (x_0,y_0,z_0), \\
f_3 \equiv x_1^3 &\mod (x_0,y_0,z_{\le 1}), \\
f_4 \equiv z_2^2 &\mod (x_{\le 1},y_0,z_{\le 1}), \\
f_5 \equiv y_1^5 &\mod (x_{\le 1},y_0,z_{\le 2}) \\
f_6 \equiv z_3^2 + x_2^3 &\mod (x_{\le 1},y_{\le 1},z_{\le 2}).
\end{cases}
\]
Let $W_1 := V_1 \cap V(f_2,\dots,f_6)$. On $W_1 \cap D(x_2)$, both $x_2$ and $z_3$ are units
(the fact that $z_3$ is a unit on $W_1 \cap D(x_2)$ follows from the equation $z_3^2 + x_2^3 = 0$, 
which holds on this set). For $m \ge 7$, we can write
\[
f_m \equiv 2z_3z_{m-3} + 3x_2^2x_{m-4} + G_{1,m} \mod (x_{\le 1},y_{\le 1},z_{\le 2}),
\]
where $G_{1,m}$ does not depend on 
$z_{j-3}$ or $x_{j-4}$ for any $j \ge m$. 

If the characteristic is different from 2, we can divide by $2z_3$ to solve 
for $z_{m-3}$ using the equation $f_m = 0$ for $m \ge 7$. 
If the characteristic is different from 3, we can divide by $3x_2^2$ and use the same
equations to solve for $x_{m-4}$ for $m \ge 7$. Either way, we obtain the irreducible
locally closed set
\[
C_1^\o = 
\begin{cases}
V(x_{\le 1},y_{\le 1},z_{\le 2}, z_3^2+x_2^3)_{x_2}
|_{z_{m-3} = (2z_3)^{-1} P_{1,m},\; m \ge 7} & (\chr k \ne 2) \\
V(x_{\le 1},y_{\le 1},z_{\le 2}, z_3^2+x_2^3)_{x_2}
|_{x_{m-4} = (3x_2^2)^{-1} Q_{1,m},\; m \ge 7} & (\chr k \ne 3)
\end{cases}
\]
where $P_{1,m} = - 3x_2^2x_{m-4} - G_{1,m}$ and
$Q_{1,m} = - 2z_3z_{m-3}  - G_{1,m}$.

Note that the two definitions of $C_1^\o$ given for $\chr k \ge 5$ agree.
The closure of $C_1^\o$ defines an irreducible set $C_1 \subset X_\infty^0$.
\end{step}

\begin{step}
On $W_1 \cap V(x_2)$ we have $z_2 = 0$, and this leaves us with 
\[
V_2 := W_1 \cap V(x_2) = V(x_{\le 2},y_{\le 1},z_{\le 3}).
\] 
Next, we have 
\[
\begin{cases}
f_7 \equiv 0 &\mod (x_{\le 2},y_{\le 1},z_{\le 3}), \\
f_8 \equiv z_4^2 &\mod (x_{\le 2},y_{\le 1},z_{\le 3}), \\
f_9 \equiv x_3^3 &\mod (x_{\le 2},y_{\le 1},z_{\le 4}), \\
f_{10} \equiv z_5^2 + y_2^5 &\mod (x_{\le 3},y_{\le 1},z_{\le 4}).
\end{cases}
\]
Let $W_2 := V_2 \cap V(f_7,\dots,f_{10})$. 
On $W_2 \cap D(y_2)$, both $y_2$ and $z_5$ are units. For $m \ge 11$, we write
\[
f_m \equiv 2z_5z_{m-5} + 5y_2^4y_{m-8} + G_{2,m} \mod (x_{\le 3},y_{\le 1},z_{\le 4}),
\]
where $G_{2,m}$ does not depend on 
$z_{j-5}$ or $y_{j-8}$ for any $j \ge m$. 

If $\chr k \ne 2$, we use the equations $f_m = 0$ for $m \ge 11$
to solve for $z_{m-5}$, and if $\chr k \ne 5$, we use the same equations
to solve for $y_{m-8}$. We obtain the set
\[
C_2^\o = 
\begin{cases}
V(x_{\le 3},y_{\le 1},z_{\le 4}, z_5^2+y_2^5)_{y_2}
|_{z_{m-5}= (2z_5)^{-1} P_{2,m},\; m \ge 11} & (\chr k \ne 2) \\
V(x_{\le 3},y_{\le 1},z_{\le 4}, z_5^2+y_2^5)_{y_2}
|_{y_{m-8}= (5y_2^4)^{-1} R_{2,m},\; m \ge 11} & (\chr k \ne 5)
\end{cases}
\]
where $P_{2,m} = - 5y_2^4y_{m-8} - G_{2,m}$ and
$R_{2,m} = - 2z_5z_{m-5} - G_{2,m}$.
The closure of $C_2^\o$ defines an irreducible set $C_2 \subset X_\infty^0$.
\end{step}

\begin{step}
On $W_2 \cap V(y_2)$ we have $z_5 = 0$, and this leaves us with 
\[
V_3 := W_2 \cap V(y_2) = V(x_{\le 3},y_{\le 2},z_{\le 5}).
\] 
Next, we look at  
\[
\begin{cases}
f_{11} \equiv 0 &\mod (x_{\le 3},y_{\le 2},z_{\le 5}), \\
f_{12} \equiv z_6^2 + x_4^3 &\mod (x_{\le 3},y_{\le 2},z_{\le 5}).
\end{cases}
\]
Let $W_3 := V_3 \cap V(f_{11},f_{12})$. 
On $W_3 \cap D(x_4)$, both $x_4$ and $z_6$ are units. 
For $m \ge 13$, we write
\[
f_m \equiv 2z_6z_{m-6} + 3x_4^2x_{m-8} + G_{3,m} \mod (x_{\le 3},y_{\le 2},z_{\le 5}),
\]
where $G_{3,m}$ does not depend on 
$z_{j-6}$ or $x_{j-8}$ for any $j \ge m$.
We use the equations $f_m = 0$, for $m \ge 13$, to solve for $z_{m-6}$ if 
$\chr k \ne 2$, and to solve for $x_{m-8}$ if $\chr k \ne 3$. We obtain the set
\[
C_3^\o = 
\begin{cases}
V(x_{\le 3},y_{\le 2},z_{\le 5}, z_6^2 + x_4^3)_{x_4}
|_{z_{m-6} = (2z_6)^{-1}P_{3,m},\; m \ge 13} & (\chr k \ne 2) \\
V(x_{\le 3},y_{\le 2},z_{\le 5}, z_6^2 + x_4^3)_{x_4}
|_{x_{m-8} = (3x_4^2)^{-1}Q_{3,m},\; m \ge 13} & (\chr k \ne 3)
\end{cases}
\]
where
$P_{3,m} = - 3x_4^2x_{m-8} - G_{3,m}$ and
$Q_{3,m} = - 2z_6z_{m-6} G_{3,m}$.
Let $C_3 \subset X_\infty^0$ be the closure of $C_3^\o$.
\end{step}

\begin{step}
On $W_3 \cap V(x_4)$ we have $z_6 = 0$, and this leaves us with 
\[
V_4 := W_3 \cap V(x_4) = V(x_{\le 4},y_{\le 2},z_{\le 6}).
\] 
Next, we look at  
\[
\begin{cases}
f_{13} \equiv 0 &\mod (x_{\le 4},y_{\le 2},z_{\le 6}), \\
f_{14} \equiv z_7^2 &\mod (x_{\le 4},y_{\le 2},z_{\le 6}), \\
f_{15} \equiv x_5^3 + y_3^5 &\mod (x_{\le 4},y_{\le 2},z_{\le 7}).
\end{cases}
\]
Let $W_4 := V_4 \cap V(f_{13},f_{14}, f_{15})$. 
On $W_4 \cap D(x_5)$, both $x_5$ and $y_3$ are units. 
For $m \ge 16$, we write
\[
f_m \equiv 3x_5^2x_{m-10} + 5y_3^4y_{m-12} + G_{4,m} \mod (x_{\le 4},y_{\le 2},z_{\le 7}),
\]
where $G_{4,m}$ does not depend on 
$x_{j-10}$ or $y_{j-12}$ for $j \ge m$. 
We use the equations $f_m = 0$, for $m \ge 16$, to solve for $x_{m-10}$ if 
$\chr k \ne 3$, and to solve for $y_{m-12}$ if $\chr k \ne 5$. We obtain the set
\[
C_4^\o = 
\begin{cases}
V(x_{\le 4},y_{\le 2},z_{\le 7}, x_5^3 + y_3^5)_{x_5}
|_{x_{m-10} = (3x_5^2)^{-1}Q_{4,m},\; m \ge 16} & (\chr k \ne 3) \\
V(x_{\le 4},y_{\le 2},z_{\le 7}, x_5^3 + y_3^5)_{x_5}
|_{y_{m-12} = (5y_3^4)^{-1}R_{4,m},\; m \ge 16} & (\chr k \ne 5)
\end{cases}
\]
where
$Q_{4,m} = - 5y_3^4y_{m-12} - G_{4,m}$ and
$R_{4,m} = - 3x_5^2x_{m-10} - G_{4,m}$.
Let $C_4 \subset X_\infty^0$ be the closure of $C_4^\o$.
\end{step}

\begin{step}
On $W_4 \cap V(x_5)$ we have $y_3 = 0$, and this leaves us with 
\[
V_5 := W_4 \cap V(x_5) = V(x_{\le 5},y_{\le 3},z_{\le 7}).
\] 
Next, we look at  
\[
\begin{cases}
f_{16} \equiv z_8^2 &\mod (x_{\le 5},y_{\le 3},z_{\le 7}), \\
f_{17} \equiv 0 &\mod (x_{\le 5},y_{\le 3},z_{\le 8}), \\
f_{18} \equiv z_9^2 + x_6^3 &\mod (x_{\le 5},y_{\le 3},z_{\le 8}).
\end{cases}
\]
Let $W_5 := V_5 \cap V(f_{16},f_{17}, f_{18})$. 
On $W_5 \cap D(x_6)$, we have units $z_9$ and $x_6$. 
For $m \ge 19$, we write
\[
f_m \equiv 2z_9z_{m-9} + 3x_6^2x_{m-12} + G_{5,m} \mod (x_{\le 4},y_{\le 2},z_{\le 7}),
\]
where $G_{5,m}$ does not depend on 
$z_{j-9}$ or $x_{j-12}$ for any $j \ge m$. 
We use the equations $f_m = 0$, for $m \ge 19$, to solve for $z_{m-9}$ if 
$\chr k \ne 2$ and for $x_{m-12}$ if $\chr k \ne 3$. We obtain the set
\[
C_5^\o = 
\begin{cases}
V(x_{\le 5},y_{\le 3},z_{\le 8}, z_9^2 + x_6^3)_{x_6}
|_{z_{m-9} = (2z_9)^{-1}P_{5,m},\; m \ge 19} & (\chr k \ne 2) \\
V(x_{\le 5},y_{\le 3},z_{\le 8}, z_9^2 + x_6^3)_{x_6}
|_{x_{m-12} = (3x_6^2)^{-1}Q_{5,m},\; m \ge 19} & (\chr k \ne 3)
\end{cases}
\]
where
$P_{5,m} = - 3x_6^2x_{m-12} - G_{5,m}$ and
$Q_{5,m} = - 2z_9z_{m-9} - G_{5,m}$.
Let $C_5 \subset X_\infty^0$ be the closure of $C_5^\o$.
\end{step}

\begin{step}
On $W_5 \cap V(x_6)$ we have $z_9 = 0$, and this leaves us with 
\[
V_6 := W_5 \cap V(x_6) = V(x_{\le 6},y_{\le 3},z_{\le 9}).
\] 
Next, we look at  
\[
\begin{cases}
f_{19} \equiv 0 &\mod (x_{\le 6},y_{\le 3},z_{\le 9}), \\
f_{20} \equiv z_{10}^2 + y_4^5 &\mod (x_{\le 6},y_{\le 3},z_{\le 9}).
\end{cases}
\]
Let $W_6 := V_6 \cap V(f_{19},f_{20})$. 
On $W_6 \cap D(y_4)$, the units are $z_{10}$ and $y_4$. 
For $m \ge 21$, we write
\[
f_m \equiv 2z_{10}z_{m-10} + 5y_4^4y_{m-16} + G_{6,m} \mod (x_{\le 6},y_{\le 3},z_{\le 9}),
\]
where $G_{6,m}$ does not depend on 
$z_{j-10}$ or $y_{j-16}$ for any $j \ge m$. 
We use the equations $f_m = 0$, for $m \ge 21$, to solve for $z_{m-10}$ if 
$\chr k \ne 2$ and for $y_{m-16}$ if $\chr k \ne 5$. We obtain the set
\[
C_6^\o = 
\begin{cases}
V(x_{\le 6},y_{\le 3},z_{\le 9}, z_{10}^2 + y_4^5)_{y_4}
|_{z_{m-10} = (2z_{10})^{-1}P_{6,m},\; m \ge 21} & (\chr k \ne 2) \\
V(x_{\le 6},y_{\le 3},z_{\le 9}, z_{10}^2 + y_4^5)_{y_4}
|_{y_{m-16} = (5y_4^4)^{-1}R_{6,m},\; m \ge 21} & (\chr k \ne 5)
\end{cases}
\]
where
$P_{6,m} = - 5y_4^4y_{m-16} - G_{6,m}$ and
$R_{6,m} = - 2z_{10}z_{m-10} - G_{6,m}$.
Let $C_6 \subset X_\infty^0$ be the closure of $C_6^\o$.
\end{step}

\begin{step}
On $W_6 \cap V(y_4)$ we have $z_{10} = 0$, and this leaves us with 
\[
V_7 := W_6 \cap V(y_4) = V(x_{\le 6},y_{\le 4},z_{\le 10}).
\] 
Next, we look at  
\[
\begin{cases}
f_{21} \equiv x_7^3 &\mod (x_{\le 6},y_{\le 4},z_{\le 10}), \\
f_{22} \equiv z_{11}^2 &\mod (x_{\le 7},y_{\le 4},z_{\le 10}), \\
f_{23} \equiv 0 &\mod (x_{\le 7},y_{\le 4},z_{\le 11}), \\
f_{24} \equiv z_{12}^2 + x_8^3 &\mod (x_{\le 7},y_{\le 4},z_{\le 11}).
\end{cases}
\]
Let $W_7 := V_7 \cap V(f_{21},\dots,f_{24})$. 
On $W_7 \cap D(x_8)$, 
both $z_{12}$ and $x_8$ are units. 
For $m \ge 25$, we write
\[
f_m \equiv 2z_{12}z_{m-12} + 3x_8^2x_{m-16} + G_{7,m} \mod (x_{\le 7},y_{\le 4},z_{\le 11}),
\]
where $G_{7,m}$ does not depend on 
$z_{j-12}$ or $x_{j-16}$ for any $j \ge m$. 
We use the equations $f_m = 0$, for $m \ge 25$, to solve for $z_{m-12}$ if 
$\chr k \ne 2$, and to solve for $x_{m-16}$ if $\chr k \ne 3$. We obtain the set
\[
C_7^\o = 
\begin{cases}
V(x_{\le 7},y_{\le 4},z_{\le 11}, z_{12}^2 + x_8^3)_{x_8}
|_{z_{m-12} = (2z_{12})^{-1}P_{7,m},\; m \ge 25} & (\chr k \ne 2) \\
V(x_{\le 7},y_{\le 4},z_{\le 11}, z_{12}^2 + x_8^3)_{x_8}
|_{x_{m-16} = (3x_8^2)^{-1}Q_{7,m},\; m \ge 25} & (\chr k \ne 3)
\end{cases}
\]
where
$P_{7,m} = - 3x_8^2x_{m-16} - G_{7,m}$ and
$Q_{7,m} = - 2z_{12}z_{m-12} - G_{7,m}$.
Let $C_7 \subset X_\infty^0$ be the closure of $C_7^\o$.
\end{step}

\begin{step}
On $W_7 \cap V(x_8)$ we have $z_{12} = 0$, and this leaves us with 
\[
V_8 := W_7 \cap V(x_8) = V(x_{\le 8},y_{\le 4},z_{\le 12}).
\] 
Finally, we look at
\[
\begin{cases}
f_{25} \equiv y_5^5 &\mod (x_{\le 8},y_{\le 4},z_{\le 12}), \\
f_{26} \equiv z_{13}^2 &\mod (x_{\le 8},y_{\le 5},z_{\le 12}), \\
f_{27} \equiv x_9^3 &\mod (x_{\le 8},y_{\le 5},z_{\le 13}), \\
f_{28} \equiv z_{14}^2 &\mod (x_{\le 9},y_{\le 5},z_{\le 13}), \\
f_{29} \equiv 0 &\mod (x_{\le 9},y_{\le 5},z_{\le 14}), \\
f_{30} \equiv z_{15}^2 + x_{10}^3 + y_6^5 &\mod (x_{\le 9},y_{\le 5},z_{\le 14}).
\end{cases}
\]
Let $W_8 := V_8 \cap V(f_{25},\dots,f_{30})$. 
This time we localize at a product to make all variables involved in the last equation units:
on $W_8 \cap D(z_{15}x_{10})$, all three variables $z_{15}$, $x_{10}$, and $y_6$ are units. 
For $m \ge 31$, we write
\[
f_m \equiv 2z_{15}z_{m-15} + 3x_{10}^2x_{m-20} + 5y_6^4y_{m-24} 
+ G_{8,m} \mod (x_{\le 9},y_{\le 5},z_{\le 14}),
\]
where $G_{8,m}$ does not depend on 
$z_{j-15}$, $x_{j-20}$ or $y_{j-24}$ for any $j \ge m$. 
We use the equations $f_m = 0$, for $m \ge 31$, to solve for $z_{m-15}$ if 
$\chr k \ne 2$, and to solve for $x_{m-20}$ if $\chr k \ne 3$. We obtain the set
\[
C_8^\o = 
\begin{cases}
V(x_{\le 9},y_{\le 5},z_{\le 14}, z_{15}^2 + x_{10}^3 + y_6^5)_{z_{15}}
|_{z_{m-15} = (2z_{15})^{-1}P_{8,m},\; m \ge 31} & (\chr k \ne 2) \\
V(x_{\le 9},y_{\le 5},z_{\le 14}, z_{15}^2 + x_{10}^3 + y_6^5)_{x_{10}}
|_{x_{m-20} = (3x_{10}^2)^{-1}Q_{8,m},\; m \ge 31} & (\chr k \ne 3)
\end{cases}
\]
where
$P_{8,m} = - 3x_{10}^2x_{m-20} - 5y_6^4y_{m-24} - G_{8,m}$ and
$Q_{8,m} = - 2z_{15}z_{m-15} - 5y_6^4y_{m-24} - G_{8,m}$.
Let $C_8 \subset X_\infty^0$ be the closure of $C_8^\o$.
\end{step}

\begin{step}
In the above steps, we have constructed closed irreducible sets $C_1,\dots,C_8 \subset X_\infty$. 
These sets fill up the whole fiber over the origin, hence 
\[
X_\infty^0 = C_1 \cup \dots \cup C_8.
\]
All we are left to show is that each $C_i$ is an irreducible component
of $X_\infty^0$. Equivalently, we need to show that there are no inclusions among any two of
them. 

It is clear from the construction (where we constructed the sets $C_i^\o$ by 
a series of localizations and subsequent restrictions to the complements)
that $C_i \not \subset C_j$ whenever $i < j$. 
The delicate part is to show that, similarly, $C_i \not \supset C_j$ for all $i < j$, hence there 
are no inclusions among any two of these sets. 

Here we explain why $C_1 \not\supset C_2$. The other cases are treated similarly and are left to the
reader.

First, assume that $\chr k \ne 2$. We look at one particular defining equation of
$C_1^\o$ in $D(x_2)$, namely, $z_5 = (2z_3)^{-1} P_{1,8}$. 
The reason we want to focus on this equation is that $z_5 \not\equiv 0$ on $C_2$. 
Rewriting the equation as $z_3z_5 = \frac 12 P_{1,8}$, we can extend it beyond $D(x_2)$. 
We look at this equation on the set $V(x_{\le 3},y_{\le 1},z_{\le 4} \setminus z_3)$.
The polynomial $P_{1,8}$ vanishes identically over this set, 
thus the given equation reduces to $z_3z_5 = 0$. 
Note that $C_1 \cap V(x_{\le 3},y_{\le 1},z_{\le 4} \setminus z_3)$ is an irreducible set and, by what we just said,
$z_3z_5$ vanishes identically on this set. As $z_3 \not \equiv 0$ 
on $C_1 \cap V(x_{\le 3},y_{\le 1},z_{\le 4} \setminus z_3)$,   
we conclude that $z_5 \equiv 0$ over there. 
However, $z_5 \not\equiv 0$ over $C_2$, 
since the latter was constructed as the closure of the set $C_2^\o$, 
which is contained in $D(z_5)$. Noting that $C_2 \subset V(x_{\le 3},y_{\le 1},z_{\le 4} \setminus z_3)$,
we conclude that $C_2 \not\subset C_1$. 

If $\chr k = 2$, then we do not have the equation $z_5 = (2z_3)^{-1} P_{1,8}$, 
but we can use instead the equation $x_{4} = (3x_2^2)^{-1} Q_{1,8}$, 
which also comes from setting $f_8 = 0$. 
This time we look at the equation on the set $V(x_{\le 3}\setminus x_2,y_{\le 1},z_{\le 4})$. 
The polynomial $Q_{1,8}$ vanishes identically on this set, 
hence the equation reduces to $x_2^2x_4 = 0$. 
By the same argument as in the previous case, we conclude that
$x_4$ vanishes along $C_1 \cap V(x_{\le 3}\setminus x_2,y_{\le 1},z_{\le 4})$. 
On the other hand, a quick inspection of the equations of $C_2^\o$ in $D(y_2)$
show that $x_4 \not\equiv 0$ on $C_2$, and since $C_2$ is contained 
$V(x_{\le 3}\setminus x_2,y_{\le 1},z_{\le 4})$, we conclude that
$C_2 \not\subset C_1$ in characteristic 2 as well. 
\end{step}

\begin{bibdiv}
\begin{biblist}

\bib{Art66}{article}{
   author={Artin, Michael},
   title={On isolated rational singularities of surfaces},
   journal={Amer. J. Math.},
   volume={88},
   date={1966},
   pages={129--136},
}

\bib{CM21}{article}{
   author={Cobo, Helena},
   author={Mourtada, Hussein},
   title={Jet schemes of quasi-ordinary surface singularities},
   journal={Nagoya Math. J.},
   volume={242},
   date={2021},
   pages={77--164},
}

\bib{dF13}{article}{
   author={de Fernex, Tommaso},
   title={Three-dimensional counter-examples to the Nash problem},
   journal={Compos. Math.},
   volume={149},
   date={2013},
   number={9},
   pages={1519--1534},
}

\bib{dF18}{article}{
   author={de Fernex, Tommaso},
   title={The space of arcs of an algebraic variety},
   conference={
      title={Algebraic geometry: Salt Lake City 2015},
   },
   book={
      series={Proc. Sympos. Pure Math.},
      volume={97},
      publisher={Amer. Math. Soc., Providence, RI},
   },
   date={2018},
   pages={169--197},
}
		
\bib{dFD16}{article}{
   author={de Fernex, Tommaso},
   author={Docampo, Roi},
   title={Terminal valuations and the Nash problem},
   journal={Invent. Math.},
   volume={203},
   date={2016},
   number={1},
   pages={303--331},
}

\bib{dFW24}{article}{
   author={de Fernex, Tommaso},
   author={Wang, Shih-Hsin},
   title={Families of jets of arc type and higher (co)dimensional Du Val
   singularities},
   language={English, with English and French summaries},
   journal={C. R. Math. Acad. Sci. Paris},
   volume={362},
   date={2024},
   pages={119--139},
}


\bib{DV34}{article}{
   author={Du Val, Patrick},
   title={On isolated singularities of surfaces which do not affect the conditions of adjunction 
   (Part I)},
   journal={Proc. Cambridge Philos. Soc.},
   volume={30},
   date={1934},
   pages={483--491},
}

\bib{EM09}{article}{
   author={Ein, Lawrence},
   author={Musta\c{t}\u{a}, Mircea},
   title={Jet schemes and singularities},
   conference={
      title={Algebraic geometry---Seattle 2005. Part 2},
   },
   book={
      series={Proc. Sympos. Pure Math.},
      volume={80},
      publisher={Amer. Math. Soc., Providence, RI},
   },
   date={2009},
   pages={505--546},
}

\bib{FdB12}{article}{
   author={Fern\'andez de Bobadilla, Javier},
   title={Nash problem for surface singularities is a topological problem},
   journal={Adv. Math.},
   volume={230},
   date={2012},
   number={1},
   pages={131--176},
}
		
\bib{FdBPP12}{article}{
   author={Fern\'{a}ndez de Bobadilla, Javier},
   author={Pe Pereira, Mar\'{\i}a},
   title={The Nash problem for surfaces},
   journal={Ann. of Math. (2)},
   volume={176},
   date={2012},
   number={3},
   pages={2003--2029},
}

\bib{GK90}{article}{
   author={Greuel, G.-M.},
   author={Kr\"oning, H.},
   title={Simple singularities in positive characteristic},
   journal={Math. Z.},
   volume={203},
   date={1990},
   number={2},
   pages={339--354},
}

\bib{IK03}{article}{
   author={Ishii, Shihoko},
   author={Koll\'{a}r, J\'{a}nos},
   title={The Nash problem on arc families of singularities},
   journal={Duke Math. J.},
   volume={120},
   date={2003},
   number={3},
   pages={601--620},
}



\bib{LJ80}{article}{
  author={Lejeune-Jalabert, Monique},
   title={Arcs analytiques et r\'esolution minimale des singularit\'es des surfaces quasi-homogènes},
   conference={
      title={S\'eminaire sur les Singularit\'es des Surfaces},
   },
   book={
      series={Lecture Notes in Math.},
      volume={777},
      publisher={Springer-Verlag},
   },
   date={1980},
   pages={303--336},
}

\bib{LJR99}{article}{
   author={Lejeune-Jalabert, Monique},
   author={Reguera-L\'opez, Ana J.},
   title={Arcs and wedges on sandwiched surface singularities},
   journal={Amer. J. Math.},
   volume={121},
   date={1999},
   number={6},
   pages={1191--1213},
}

\bib{LA11}{article}{
   author={Leyton-Alvarez, Maximiliano},
   title={R\'esolution du probl\`eme des arcs de Nash pour une famille
   d'hypersurfaces quasi-rationnelles},
   language={French, with English and French summaries},
   journal={Ann. Fac. Sci. Toulouse Math. (6)},
   volume={20},
   date={2011},
   number={3},
   pages={613--667},
}

\bib{Lip69}{article}{
   author={Lipman, Joseph},
   title={Rational singularities, with applications to algebraic surfaces
   and unique factorization},
   journal={Inst. Hautes \'Etudes Sci. Publ. Math.},
   number={36},
   date={1969},
   pages={195--279},
}

		
\bib{Mou14}{article}{
   author={Mourtada, Hussein},
   title={Jet schemes of rational double point singularities},
   conference={
      title={Valuation theory in interaction},
   },
   book={
      series={EMS Ser. Congr. Rep.},
      publisher={Eur. Math. Soc., Z\"{u}rich},
   },
   date={2014},
   pages={373--388},
}

\bib{Mou17}{article}{
   author={Mourtada, Hussein},
   title={Jet schemes of normal toric surfaces},
   language={English, with English and French summaries},
   journal={Bull. Soc. Math. France},
   volume={145},
   date={2017},
   number={2},
   pages={237--266},
}
		
\bib{Nas95}{article}{
   author={Nash, John F., Jr.},
   title={Arc structure of singularities},
   note={A celebration of John F. Nash, Jr.},
   journal={Duke Math. J.},
   volume={81},
   date={1995},
   number={1},
   pages={31--38 (1996)},
}

\bib{Nob18}{article}{
   author={Nobile, Augusto},
   title={On the Nash problem for surfaces in positive characteristic},
   year={2018},
   eprint={arXiv:1812.00288},
}

\bib{PP13}{article}{
   author={Pe Pereira, Mar\'ia},
   title={Nash problem for quotient surface singularities},
   journal={J. Lond. Math. Soc. (2)},
   volume={87},
   date={2013},
   number={1},
   pages={177--203},
}

\bib{Ple08}{article}{
   author={Pl\'enat, Camille},
   title={The Nash problem of arcs and the rational double points $D_n$},
   language={English, with English and French summaries},
   journal={Ann. Inst. Fourier (Grenoble)},
   volume={58},
   date={2008},
   number={7},
   pages={2249--2278},
}

\bib{Ple15}{article}{
   author={Pl\'enat, Camille},
   author={Spivakovsky, Mark},
   title={The Nash problem and its solution: a survey},
   journal={J. Singul.},
   volume={13},
   date={2015},
   pages={229--244},
}

\bib{PPP06}{article}{
   author={Pl\'enat, Camille},
   author={Popescu-Pampu, Patrick},
   title={A class of non-rational surface singularities with bijective Nash
   map},
   language={English, with English and French summaries},
   journal={Bull. Soc. Math. France},
   volume={134},
   date={2006},
   number={3},
   pages={383--394},
}

\bib{PS12}{article}{
   author={Pl\'enat, Camille},
   author={Spivakovsky, Mark},
   title={The Nash problem of arcs and the rational double point ${\rm
   E}_6$},
   journal={Kodai Math. J.},
   volume={35},
   date={2012},
   number={1},
   pages={173--213},
}

\bib{Reg95}{article}{
   author={Reguera, A.-J.},
   title={Families of arcs on rational surface singularities},
   journal={Manuscripta Math.},
   volume={88},
   date={1995},
   number={3},
   pages={321--333},
}

\bib{Reg04}{article}{
   author={Reguera, Ana J.},
   title={Image of the Nash map in terms of wedges},
   language={English, with English and French summaries},
   journal={C. R. Math. Acad. Sci. Paris},
   volume={338},
   date={2004},
   number={5},
   pages={385--390},
}

\bib{Reg06}{article}{
   author={Reguera, Ana J.},
   title={A curve selection lemma in spaces of arcs and the image of the
   Nash map},
   journal={Compos. Math.},
   volume={142},
   date={2006},
   number={1},
   pages={119--130},
}

\bib{Reg12}{article}{
   author={Reguera, Ana J.},
   title={Arcs and wedges on rational surface singularities},
   journal={J. Algebra},
   volume={366},
   date={2012},
   pages={126--164},
}

		
\bib{Voj07}{article}{
  author={Vojta, Paul},
   title={Jets via Hasse-Schmidt derivations},
   conference={
      title={Diophantine geometry},
   },
   book={
      series={CRM Series},
      volume={4},
      publisher={Ed. Norm., Pisa},
   },
   date={2007},
   pages={335--361},
}

\end{biblist}
\end{bibdiv}

\end{document}